\documentclass[11pt,a4paper]{article}
\usepackage{xr-hyper}
\ifx\pdfpageheight\undefined
   \usepackage[colorlinks=true,linkcolor=blue,citecolor=red,%
      urlcolor=green]{hyperref}
   \usepackage{graphicx}
   \makeatletter
   \edef\Gin@extensions{\Gin@extensions,.mps}
   \DeclareGraphicsRule{.mps}{eps}{*}{}
   \makeatother
\else
   \usepackage[pdftex]{graphicx}
   \usepackage[bookmarksopen=false,pdftex=true,breaklinks=true,%
      backref=page,pagebackref=true,plainpages=false,%
      hyperindex=true,pdfstartview=FitH,%
      colorlinks=true,%
      pdfpagelabels=true,linkcolor=blue,%
      citecolor=red,urlcolor=blue,hypertexnames=false%
      ]%
   {hyperref}
\fi

\newcommand{\som}{\sum\nolimits}

\newcommand{\ints}{\mathbb{Z}}

\newcommand{\Tr}[2]{#1_{\{#2\}}}
\newcommand{\tr}{\mbox {\sf tr}}

\usepackage{amsfonts}
\usepackage{amssymb}
\usepackage{amsmath}
\usepackage[utf8]{inputenc}

\usepackage{a4wide}
\usepackage{verbatim}
\usepackage{proof}
\usepackage{latexsym}

\usepackage{mytheorems}
\usepackage[authoryear]{natbib}


\setlength{\oddsidemargin}{0in} 
\setlength{\textwidth}{6.27in} 
\setlength{\topmargin}{0in} 
\setlength{\headheight}{0in}
\setlength{\headsep}{0in}
\setlength{\textheight}{9.19in} 
\setlength{\footskip}{.8in}


\newcommand{\pair}[1]{{\langle #1 \rangle}}

\begin{document}

\title{Some remarks about normal rings}

\author{Thierry Coquand\thanks{Department of Computer Science, Chalmers, University of G\"oteborg, 41296 G\"oteborg, Sweden. {\tt Thierry.Coquand@cse.gu.se}}~ 
and Henri Lombardi\thanks{Laboratoire de Math\'ematiques. Universit\'e de Franche-Comt\'e. F-25030 Besan{\c c}on Cedex.  {\tt henri.lombardi@univ-fcomte.fr}}}

\date{}
\maketitle

\noindent This paper has been published in
the book\\ \textsl{Concepts of proof in mathematics, philosophy, and computer
              science},\\
{Ontos Math. Log.},
{vol. 6}, {141--149},
  {De Gruyter, Berlin},
{(2016)}.
 
\begin{abstract} 
We give a constructive proof that $R[X]$ is normal when $R$ is normal.
We apply this result to an operation needed for studying the henselization of a local ring.
Our proof is based on the case where $R$ is without zero divisors, which is more involved than the case where $R$ is an integral domain. We have to use a constructive deciphering technique  that replaces the use of minimal primes (in classical mathematics) by suitable explicit localizations in a suitable tree.   
\end{abstract}

\smallskip Kewwords: Normal ring, $pf$-ring, constructive mathematics, gcd-tree.

\smallskip MSC2010: 13B22, 13B30, 13B40, 03F65

\medskip  An {\em integrally closed domain} $R$ is an integral domain whose integral
closure in its field of fraction is $R$ itself. An element $b$ is {\em integral} over an ideal $I$
iff $b$ satisfies an integral relation
$$
b^n + u_1b^{n-1} + \cdots + u_n = 0
$$
with $u_l$ in $I^l$. We can reformulate the definition of being integrally closed
by stating that whenever $b$ is integral over $\pair{a}$ then $b$ belongs
to $\pair{a}$. In this form, this definition makes sense even if $R$ is an arbitrary ring
(not necessarily a domain) and this characterizes the notion of {\em normal} ring.
It can be checked that this is equivalent to the following: any localization of $R$ at
a prime ideal is an integrally closed integral domain \cite[Proposition~6.4]{DLQS}.

 This paper is mainly concerned with the  analysis of the
following classical result: if $R$ is an integrally closed domain then so is $R[X]$.
We first recall a proof which reduces this result to Kronecker's Theorem \cite[Theorem~3.3]{LQ}.
Interestingly, the argument depends in a crucial how we interpret constructively
the notion of ``integral domain''. Logically, to be an integral domain can be stated as
$$
\forall x.\forall y.~xy = 0 \rightarrow [x=0\vee y=0]\leqno{(1)}
$$
which is classically, but not constructively, equivalent to
$$
\forall x.~x= 0\vee [\forall y. xy = 0\rightarrow y=0].\leqno{(2)}
$$
On this form, this means that any element is $0$ or is regular. 
This Definition (2) is actually the usual definition of integral domain in constructive mathematics \cite{LQ,MRR}.
With this definition the argument using Kronecker's Theorem makes sense constructively.

The definition
(1) also has been considered in constructive algebra: a ring satisfying
this condition is called a ring {\em without zero divisors} \cite{LQ}. 
The main part of this paper presents a proof that if $R$ is a normal ring without zero divisors than so is $R[X]$. 
What is surprising is that this proof seems to require a technique which is used for analyzing argument involving {\em minimal} prime ideal \cite[Section XV-7]{LQ}. 
Furthermore, the proof involves the introduction of the notion of {\em gcd tree} of two polynomials, which is important in other context \cite{ZMT}.
Going from Definition (2) to Definition (1), classically equivalent, requires a much more complex argument. 

The advantage of Definition (2) is that it is now relatively easy to conclude from this that, more generally, if $R$ is normal (without any integrality condition) then so is $R[X]$. 
The last section analyzes a connected operation useful for studying the henselization of a local ring.

\section{Constructible and Gcd trees}

Given a reduced ring $R$ we define the notion of {\em constructible tree} for $R$. This is a binary tree.
To each node of this tree is associated a reduced ring, and $R$ is associated to the root of the tree.
Such a tree can only grow in the following way: we choose a leaf, and an element $a$ of the ring~$S$ associated to this leaf. 
We add then two sons to this node: to the left branch we associate
the ring $S[1/a]$ and to the right branch the ring $S/\sqrt{\pair{a}}$. Any such tree defines a partition of the constructible spectrum of $R$ \cite{johnstone}.

If we look at the leftmost branch of this tree, the leaf is of the form $R[1/(a_1\cdots a_n)]$ and so is a localization of the ring $R$. 

The main proofs in this note will be by induction on the size of a given constructible tree. 

\medskip
If we have two polynomials $P$ and $Q$ in $R[X]$ we can associate a constructible tree which corresponds to the formal computation of the gcd of $P$ and $Q$.
To each leaf $S$ are also associated polynomials $A,B,G,P_1,Q_1$ in $S[X]$, with $G$ monic, which witness the computation of the gcd of $P$ and $Q$
$$
AP_1+BQ_1 = 1~~~~~~P = GP_1~~~~~Q = G Q_1.
$$

Notice that for building this tree, $R$ does not need to be discrete (i.e. to have a decidable equality). 
Here is a simple example: $P = X^2$ and $Q = aX + b$. 
We start by the two branches $S_0 = R[1/a]$ and $S_1 = R/\sqrt{\pair{a}}$. 
Over $S_0$ we have the two branches $S_{00} = S_0[1/b]$ and $S_{01} = S_0/\sqrt{\pair{b}}$.
Over $S_1$ we have the two branches $S_{10} = S_1[1/b]$ and $S_{11} = S_1/\sqrt{\pair{b}}$.
The gcd is $1$ over $S_{00}$ and $S_{10}$, and is $X$ over $S_{01}$ and is $X^2$ over $S_{11}$.

This tree is called the {\em gcd tree} of $P$ and $Q$.

If one of the polynomial is monic, one can reduce the size of this tree by using subresultants~\cite{subres}.

\section{Kronecker's Theorem}

We shall only need a simple case of Kronecker's Theorem \cite[Theorem~3.3]{LQ}.

\begin{theorem}\label{kronecker}
Let $R$ be a ring, if $X^m + a_1 X^{m-1} + \cdots + a_m$ divides a polynomial of the form
$X^n + b_1X^{n-1} + \cdots + b_n$ in $R[X]$ then $a_1,\dots,a_m$ are integral over the subring
of $R$ generated by $b_1,\dots,b_n$.
\end{theorem}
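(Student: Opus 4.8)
The plan is to reduce everything to the observation that the roots of the divisor $f := X^m + a_1 X^{m-1} + \cdots + a_m$ are automatically roots of the dividend $g := X^n + b_1 X^{n-1} + \cdots + b_n$, and hence are integral over the subring $A := \ints[b_1,\dots,b_n]$ of $R$ generated by the $b_j$. Since the coefficients $a_i$ are, up to sign, the elementary symmetric functions of these roots, and since the elements integral over $A$ form a ring, the $a_i$ will be integral over $A$ as well. The only point requiring care is that $R$ need not contain the roots of $f$; constructively we cannot pass to an algebraic closure, so I would manufacture the roots explicitly using the universal splitting algebra of $f$.

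Concretely, write $g = f h$ in $R[X]$ with $h$ monic of degree $n-m$. First I would form the universal splitting algebra $C$ of the monic polynomial $f$ over $R$, namely the quotient of $R[\eta_1,\dots,\eta_m]$ by the relations identifying the elementary symmetric functions $e_i(\eta_1,\dots,\eta_m)$ with $(-1)^i a_i$, so that $f(X) = \prod_{j=1}^m (X - \eta_j)$ in $C[X]$ and $a_i = (-1)^i e_i(\eta_1,\dots,\eta_m)$. This $C$ is a free $R$-module (of rank $m!$), so $R \to C$ is injective. Next, each generator $\eta_j$ is a root of $f$, whence $f(\eta_j)=0$ and therefore $g(\eta_j) = f(\eta_j)\,h(\eta_j) = 0$; written out, this is the monic relation $\eta_j^n + b_1 \eta_j^{n-1} + \cdots + b_n = 0$ with all coefficients in $A$, exhibiting $\eta_j$ as integral over $A$. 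Because the elements of $C$ integral over $A$ form a subring, each $a_i = (-1)^i e_i(\eta_1,\dots,\eta_m)$ is then integral over $A$, giving a monic relation $a_i^N + c_1 a_i^{N-1} + \cdots + c_N = 0$ with every $c_k \in A$. Finally, since $a_i$ and the $c_k$ all lie in $R$ and $R$ embeds in $C$, this relation already holds in $R$, which is exactly the assertion that $a_i$ is integral over $A$.

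The main obstacle is the constructive bookkeeping around passing to the roots of $f$. Classically one would embed $R$ into a splitting field of $g$ and observe that $f$ is a product of some of the linear factors $X - \xi_i$ of $g$; but over a general ring there is no unique factorization of $g$, so I would deliberately split $f$ rather than $g$, which sidesteps having to match factors. The replacement for the splitting field is the universal splitting algebra, and the two properties I must be sure of are that it is a faithful (indeed free) extension of $R$, so that the integral relation found in $C$ descends to $R$, and that its generators are genuine roots of $f$. Both are standard features of the construction, so once they are in place the argument is a short computation with symmetric functions together with the transitivity of integral dependence.
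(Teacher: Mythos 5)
Your argument is correct, and it uses the same basic tool as the paper --- the universal splitting algebra --- but applies it to the other polynomial, which genuinely changes the shape of the proof. The paper forms the splitting algebra $T$ of the \emph{dividend} $X^n+b_1X^{n-1}+\cdots+b_n=(X-t_1)\cdots(X-t_n)$ and then asserts that each $a_i$ is a polynomial in $t_1,\dots,t_n$; that assertion is exactly the point you flag as delicate, since it amounts to expressing the monic divisor in terms of the linear factors of the dividend (classically, ``$f$ is a product of some of the $X-t_j$''), and the paper leaves this to the reference rather than spelling it out. By splitting the \emph{divisor} $f$ instead, you make the corresponding step immediate: each adjoined root $\eta_j$ satisfies $g(\eta_j)=f(\eta_j)h(\eta_j)=0$, a monic equation with coefficients in $\ints[b_1,\dots,b_n]$, so the $\eta_j$ are integral over that subring and the $a_i=(-1)^ie_i(\eta_1,\dots,\eta_m)$ are integral because integral elements form a ring; freeness of the splitting algebra over $R$ (rank $m!$, and a smaller algebra than the paper's rank-$n!$ one) lets the relation descend. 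The trade-off is that your route leans on the closure of integral elements under sums and products, whereas the paper's route, once the expression of $a_i$ in the $t_j$ is justified, concludes in one line. Both are constructively valid; yours is the more self-contained write-up of the two.
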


\begin{proof}
We introduce the splitting algebra\footnote{$T=R[X_1,\dots,X_n]/J(f)=R[x_1,\dots,x_n]$
where $J(f)$ is the ideal of symmetric relators necessary to 
identify $\prod_{j=1}^n(X-x_j)$ with $f(X)$  in  $T[X]$. We let $x=x_1$ and the quotient ring $R[x]=R[X]/\pair f$ is identified with a subring of $T$. If $g(X,Y)=\frac {f(X)-f(Y)}{X-Y}$ then $g(x_1,X)=\prod_{i=2}^n(X-x_i)$ in $T[X]$ and $g(x_1,x_j)=0$ for $j\geq 2$.} $T$ of  $X^n + b_1X^{n-1} + \cdots + b_n$ \cite{LQ} so that
$X^n + b_1X^{n-1} + \cdots + b_n = (X-t_1)\cdots (X-t_n)$ in $T[X]$. The ring $R$ embeds in $T$
and $a_i$ is a polynomial in $t_1,\dots,t_n$.
\end{proof}

\begin{corollary}\label{kronecker1}
Let $R$ be a ring, if $Y + a_0 X^{m} + \cdots + a_m$ divides a polynominal of the form
$Y^n + b_1Y^{n-1} + \cdots + b_n$ in $R[X,Y]$ then all coefficients $a_0,\dots,a_m$ 
are roots of polynomials of the form $Z^l + p_1Z^{l-1} + \cdots + p_l$ where
$p_i$ is a homogeneous polynomial of degree $i$ in $b_1,\dots,b_n$ where $b_j$ has
weight $j$.
\end{corollary}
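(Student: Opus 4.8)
The plan is to exploit that the divisor is monic and linear in $Y$, so that divisibility amounts to a single root condition, and then to read off the individual coefficients $a_0,\dots,a_m$ from the two extreme coefficients (in $X$) of that relation. Throughout write $c = a_0X^m + a_1X^{m-1} + \cdots + a_m \in R[X]$, so the hypothesis is $Y + c \mid Y^n + b_1Y^{n-1} + \cdots + b_n$ in $R[X][Y]$; I read the $b_j$ as elements of $R$, which is what makes the weighted statement correct and is the form in which the corollary is applied.

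First I would record the root condition. Since $Y+c$ is monic of degree $1$ in $Y$, Euclidean division of $g := Y^n + b_1Y^{n-1} + \cdots + b_n$ by $Y+c$ over $R[X]$ leaves a remainder equal to $g(-c)\in R[X]$, and divisibility forces this to vanish. This is precisely Theorem~\ref{kronecker} in the linear case made explicit: $c$ is a root of
$$ P(Z) \ :=\ Z^n - b_1Z^{n-1} + b_2Z^{n-2} - \cdots + (-1)^n b_n \ \in\ R[Z], $$
whose coefficient of $Z^{n-i}$ is the monomial $(-1)^i b_i$, homogeneous of weight $i$ once $b_j$ is given weight $j$. Thus $P$ already has the shape demanded by the statement.

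Next I would dispose of the constant coefficient $a_m$ by evaluating the identity $g(-c)=0$ at $X=0$. Since $c\big|_{X=0}=a_m$ this gives $g(-a_m)=0$, i.e.\ $P(a_m)=0$, so $a_m$ is a root of $P$ with $p_i=(-1)^i b_i$ homogeneous of weight $i$, as required. For the remaining coefficients $a_0,\dots,a_{m-1}$ I expect them to be forced nilpotent, hence to satisfy the degenerate relation $Z^l=0$ (all $p_i=0$, vacuously homogeneous of weight $i$). To see this I would work in $R_{\mathrm{red}}=R/\mathrm{Nil}(R)$, where the identity $g(-c)=0$ still holds; since each $b_k$ is constant in $X$, the coefficient of the top power $X^{mn}$ in $g(-c)$ is $\pm a_0^{\,n}$, so $a_0^{\,n}=0$ and hence $a_0=0$ in the reduced ring. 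Repeating after this reduction kills $a_1,\dots,a_{m-1}$ in turn, so $a_0,\dots,a_{m-1}\in\mathrm{Nil}(R)$.

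The step I expect to be the crux — and the reason the statement must allow the zero polynomial among the $p_i$ — is exactly this asymmetry: only the constant term $a_m$ inherits the full weighted relation $P$, while the higher coefficients are merely nilpotent. The one delicate point is the nilpotency argument, since one may not reason inside a domain (rings with zero-divisors are the whole interest of the paper); passing to $R_{\mathrm{red}}$ and comparing leading coefficients in $X$ by descending induction is what makes it go through without invoking minimal primes.
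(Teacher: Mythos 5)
Your reading of the hypothesis --- that the $b_j$ are constants in $R$ --- is the opposite of the form in which the corollary is actually used. In the proof of Lemma \ref{main} (and already in the corollary for integral domains) it is invoked for the factorization $Y^n + A_1cY^{n-1} + \cdots + A_nc^n = (Y-H)S(X,Y)$ with $A_1,\dots,A_n$ in $R[X]$, so that $b_j = A_jc^j$ genuinely depends on $X$; and the conclusion needed there is that \emph{every} coefficient of $H$ is integral over $\pair{c}$. Under the hypothesis $b_j\in R[X]$ both pillars of your argument collapse: the coefficient of $X^{mn}$ in $g(-c)$ is no longer $\pm a_0^n$, because the terms $b_k(-c)^{n-k}$ can reach degree $mn$ and beyond, and the coefficients $a_0,\dots,a_{m-1}$ are certainly not nilpotent in general --- $Y-X$ divides $Y^2-X^2$ over $\ints$, with $a_0=-1$ a unit. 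Evaluation at $X=0$ only ever sees $a_m$, so your proof produces a nontrivial relation for the constant coefficient alone, which is useless for Lemma \ref{main}. (Your computation does show that the statement has to be read with some care: the homogeneity is meant in the \emph{coefficients} of the $b_j$, each coefficient of $b_j$ carrying weight $j$; that is what the paper's proof produces and what the applications consume, since each coefficient of $A_jc^j$ lies in $\pair{c^j}$.)

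The paper's proof goes a different route, and the difference is not cosmetic. It reduces to the generic case where $a_0,\dots,a_m$ and the coefficients of the cofactor are indeterminates, substitutes $Y=X^N$ for $N$ large so that the two-variable divisibility becomes a one-variable one to which Theorem \ref{kronecker} applies --- this is what yields an integral dependence for \emph{all} the $a_k$ simultaneously, over the subring generated by the coefficients of the $b_j$ --- and then obtains the weighted homogeneity of the $p_i$ by the scaling substitution $Y\mapsto Y/c$. The linear-in-$Y$ shape of the divisor does let you write down the root condition $g(-c)=0$, but extracting from that identity an integral dependence for the non-constant coefficients of $c$ is exactly the content of Kronecker's Theorem; it does not fall out of inspecting the two extreme coefficients in $X$.
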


\begin{proof}
It is enough to look at the case where $a_0,\dots,a_m$ are indeterminates, and $R$ is
a polynomial ring on $a_0,\dots,a_m$ and some other indeterminates.
By replacing $Y$ by $X^N$ for $N$ big enough, we get that each $a_0,\dots,a_m$
is integral over $\ints[b_1,\dots,b_n]$ and hence each $a_k$ is root
of a polynomial of the form $Z^l + p_1Z^{l-1} + \cdots + p_l$ where $p_i$ is a polynomial
in $b_1,\dots,b_n$. By replacing $Y$ by $Y/c$ where $c$ is another
indeterminate, we get that $p_i$ is homogeneous of degree $i$ in $b_1,\dots,b_n$ where $b_j$ has
weight $j$.
\end{proof}

\begin{corollary}
Let $R$ be a normal integral domain, then $R[X]$ is a normal integral domain.
\end{corollary}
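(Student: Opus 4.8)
The plan is to verify the two defining properties of a normal integral domain for $R[X]$ in turn. First I would check that $R[X]$ is an integral domain in the sense of Definition~(2). For a given $P\in R[X]$ one scans its finitely many coefficients; by Definition~(2) each is decidably zero or regular in $R$, so either all vanish, giving $P=0$, or $P$ has a genuine leading coefficient and is thus $\neq 0$. If $P\neq 0$ and $Q\neq 0$, their leading coefficients are regular, hence so is their product (a product of regular elements is regular), and this product is the leading coefficient of $PQ$; therefore $PQ\neq 0$. This shows every nonzero element of $R[X]$ is regular. Write $K$ for the field of fractions of $R$; note that $K$ is discrete (this is exactly what Definition~(2) provides for $R$), that $\mathrm{Frac}(R[X])=K(X)$, and that $K[X]$ is a principal ideal domain, hence integrally closed in $K(X)$. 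Once $R[X]$ is known to be a domain, its normality is equivalent to its being integrally closed in $K(X)$, so that is what remains.

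Let $b\in K(X)$ be integral over $R[X]$. Since $b$ is then integral over the larger ring $K[X]$, which is integrally closed in $K(X)$, we get $b\in K[X]$; write $b=\beta_0+\beta_1 X+\cdots+\beta_m X^m$ with $\beta_i\in K$. Because $R$ is integrally closed and the $\beta_i$ lie in $K$, it now suffices to prove that each $\beta_i$ is integral over $R$.

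To obtain integrality of the coefficients I would reduce a two–variable divisibility to the one–variable Theorem~\ref{kronecker} by the substitution $Y\mapsto X^r$. Starting from a monic integral dependence $b^n+c_1b^{n-1}+\cdots+c_n=0$ with $c_j\in R[X]$, set $h(Y)=Y^n+c_1Y^{n-1}+\cdots+c_n\in R[X][Y]$; since $h(b)=0$, Euclidean division gives $(Y-b)\mid h(Y)$ in $K[X][Y]$. Applying the $K$–algebra homomorphism $Y\mapsto X^r$ yields $(X^r-b)\mid h(X^r)$ in $K[X]$. For $r$ larger than $m$ and than all $\deg c_j$, the divisor $X^r-b$ is monic of degree $r$, while $h(X^r)=X^{rn}+c_1X^{r(n-1)}+\cdots+c_n$ is monic of degree $rn$ with all coefficients lying in $R$. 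Theorem~\ref{kronecker}, applied with base ring $K$, then says the coefficients of $X^r-b$ — namely the $-\beta_i$ — are integral over the subring of $K$ generated by the coefficients of $h(X^r)$, and those coefficients lie in $R$. Hence each $\beta_i$ is integral over $R$, so $\beta_i\in R$ and $b\in R[X]$, as required.

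I expect the main obstacle to be precisely the passage from integrality over $R[X]$ to integrality of the individual coefficients over $R$. A naive attempt to feed $(Y-b)\mid h(Y)$ directly into a Kronecker–type statement fails, because the coefficients $c_j$ of $h$ depend on $X$ rather than being constants of $R$; the substitution $Y\mapsto X^r$ is exactly the device that removes this obstruction, collapsing the problem to a single–variable division of monic polynomials with coefficients in $R$, and checking the degree bounds that keep both polynomials monic is the one point needing care. The earlier reduction $b\in K[X]$ is where the constructive reading of the hypotheses enters: it relies on $K$ being a discrete field, so that $K[X]$ is a principal ideal domain and hence integrally closed.
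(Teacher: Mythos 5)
Your proof is correct, but it establishes the corollary along a different (classically equivalent) axis than the paper does. You verify that $R[X]$ is integrally closed in $K(X)$, i.e.\ you take $b\in K(X)$ integral over the \emph{ring} $R[X]$, push it into $K[X]$, and then use the substitution $Y\mapsto X^r$ to feed the divisibility $(X^r-b)\mid h(X^r)$ into the plain one-variable Theorem~\ref{kronecker}, concluding that each coefficient $\beta_i$ is integral over $R$ as a ring. The paper instead proves the principal-ideal formulation directly: from $P$ integral over $\pair{Q}$ it extracts $cP=HQ$ by Euclidean division in $K[X]$, factors $Y^n+A_1cY^{n-1}+\cdots+A_nc^n=(Y-H)S(X,Y)$, and invokes the refined Corollary~\ref{kronecker1} --- whose proof contains exactly your substitution $Y\mapsto X^N$, plus a weighted-homogeneity argument --- to get the coefficients of $H$ integral over the \emph{ideal} $\pair{c}$, whence $H=cH_1$ and $P=QH_1$. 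Your route is the classical textbook argument and is slightly more economical: you never need the homogeneity refinement, only ring-integrality, and your care with Definition~(2) (discreteness of $K$, decidability of $Q=0$, the equivalence of the two normality formulations for discrete domains) makes it constructively sound. What the paper's phrasing buys, and yours does not, is portability: the identity $cP=HQ$ with $c=a^N$ not assumed regular, together with integrality over $\pair{c}$, is precisely what reappears verbatim as Lemma~\ref{main} for rings that are merely without zero divisors, where there is no fraction field to retreat to and no decidable equality; your argument leans essentially on both and so does not set up that generalization.
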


\begin{proof}
We assume given $P$ and $Q$ in $R[X]$ such
that $P$ is integral over $\pair{Q}$ and we want to show that
$P$ is in $\pair{Q}$ in $R[X]$. Let $K$ be the total fraction field of $R$.
Since $R$ is an integral domain, we can consider $R$ to be a subring of $K$.
Since $K[X]$ is euclidean, we know that $P$ is in $\pair{Q}$ in $K[X]$
and we have $cP = HQ$ for some regular element $c$.
Since $P$ is integral over $\pair{Q}$ we have a relation
$$
P^n + A_1QP^{n-1} + \cdots + A_n Q^n = 0
$$
with $A_1,\dots,A_n$ in $R[X]$ and so 
we can write
$$
Y^n + A_1 c Y^{n-1} + \cdots + A_n c^n = (Y - H)S(X,Y)
$$
where $S(X,Y)$ is a monic polynomial in $R[X][Y]$. Using Corollary \ref{kronecker1},
it follows that all coefficients of $H$ are integral over $\pair{c}$ and
hence are in $\pair{c}$ since $R$ is normal. We can then write $H = cH_1$
and so $c(P-QH_1) = 0$ in $R[X]$. It follows that we have $P=QH_1$ and hence $P$ is in $\pair{Q}$ in $R[X]$.
\end{proof}

 This is the argument we are going to adapt in the case where $R$ is normal
and without zero divisors.

\section{Polynomial ring}

 We assume that $R$ is normal without zero divisors and we show
that $R[X]$ is normal. We assume given $P$ and $Q$ in $R[X]$ such
that $P$ is integral over $\pair{Q}$ and we want to show that
$P$ is in $\pair{Q}$ in $R[X]$.

\begin{lemma}\label{reduced}
If $R$ is normal then $R$ is reduced.
\end{lemma}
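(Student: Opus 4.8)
The plan is to read off the conclusion directly from the defining property of normality by specializing the principal ideal to $\pair{0}$. Recall that $R$ being reduced means every nilpotent element is zero, so I would start from an arbitrary $x \in R$ with $x^n = 0$ for some $n \geq 1$ and aim to prove $x = 0$.

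The key observation is that the relation $x^n = 0$ is itself an integral dependence relation of $x$ over the zero ideal $\pair{0}$. Indeed, using the definition of integrality over an ideal recalled in the introduction, I write $x^n + u_1 x^{n-1} + \cdots + u_n = 0$ with every coefficient $u_l = 0$; since $\pair{0}^l = \pair{0}$, each $u_l$ trivially lies in $\pair{0}^l$, so $x$ is integral over $\pair{0}$. Now I apply the normality hypothesis with $a = 0$: because $x$ is integral over $\pair{0}$, normality forces $x \in \pair{0} = \{0\}$, that is $x = 0$. Hence every nilpotent element vanishes and $R$ is reduced.

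I do not expect any real obstacle here, since the argument is immediate once the instance $a=0$ is taken. The only point worth flagging is a matter of reading the definition correctly: the characterization of a \emph{normal} ring must be understood as quantifying over \emph{all} elements $a \in R$, including $a = 0$, which is exactly the formulation adopted in the introduction (``whenever $b$ is integral over $\pair{a}$ then $b$ belongs to $\pair{a}$''). With that understanding the lemma is a one-line specialization and requires no zero-divisor or integrality hypotheses beyond normality itself.
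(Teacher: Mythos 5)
Your proof is correct and follows essentially the same route as the paper: both read off that a vanishing power of $x$ is an integral dependence relation over $\pair{0}$ and invoke normality with $a=0$. The only cosmetic difference is that the paper states the argument for $b^2=0$ while you handle an arbitrary exponent $n$ directly, which is if anything slightly more self-contained.
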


\begin{proof}
If $b^2 = 0$ then $b$ is integral over $\pair{0}$ and so is in $\pair{0}$.
\end{proof}

\begin{lemma}\label{locnormal}
If $R$ is normal then so is $R[1/a]$.
\end{lemma}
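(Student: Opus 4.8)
The plan is to reduce normality of $S = R[1/a]$ directly to normality of $R$, exploiting the fact that $a$ becomes a unit in $S$. Recall that normality means: whenever an element is integral over a principal ideal, it already lies in that ideal. So suppose $x \in S$ is integral over a principal ideal $\pair{y}$ of $S$; I must show $x \in \pair{y}_S$. Writing elements of $S$ as fractions $r/a^k$, I would first put $x$ and $y$ over a common denominator, $x = b/a^N$ and $y = e/a^N$ with $b,e \in R$, and likewise write the coefficients $c_i$ of the integral relation over a common denominator $g_i/a^m$. Since $a^N$ is invertible in $S$, we have $\pair{y}_S = \pair{e}_S$, and $x \in \pair{y}_S$ holds if and only if $b \in \pair{e}_S$; thus it is enough to work with the numerators $b$ and $e$.

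Next I would substitute these fractions into the integral relation $x^n + c_1 y x^{n-1} + \cdots + c_n y^n = 0$ and clear denominators. Remembering that an equality in the localization $S$ comes from an equality in $R$ after multiplying by some power $a^t$, this yields an identity in $R$ of the shape $a^\mu b^n + \sum_{i=1}^n a^t g_i e^i b^{n-i} = 0$. This is not yet an integral dependence over $\pair{e}$, since the leading term is not monic. The device is to introduce the scaled element $c = a^\mu b$ and multiply through by a suitable power of $a$; a short computation then rewrites the identity as $c^n + \sum_{i=1}^n u_i c^{n-i} = 0$ with each $u_i$ visibly a multiple of $e^i$, hence $u_i \in \pair{e}^i$. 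Thus $c = a^\mu b$ is integral over the principal ideal $\pair{e}$ of $R$ in the required sense.

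Applying normality of $R$ now gives $c = a^\mu b \in \pair{e}_R$, say $a^\mu b = h e$ with $h \in R$. Since $a$ is a unit in $S$, this reads $b = (h/a^\mu)\,e$ in $S$, whence $b \in \pair{e}_S$ and therefore $x \in \pair{y}_S$, as wanted. I expect the only delicate point to be the bookkeeping of the exponents of $a$: one must choose $\mu$ (absorbing the denominators $N,m$ and the extra factor $a^t$ arising from equality in the localization) so that the substitution $c = a^\mu b$ simultaneously makes the relation monic and keeps every lower coefficient inside the correct power $\pair{e}^i$. Everything else is routine manipulation of fractions, with normality of $R$ invoked exactly once, at the very end.
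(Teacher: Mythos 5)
Your proposal is correct and follows essentially the same route as the paper: clear denominators and rescale by a power of $a$ so that $a^{\mu}b$ (the paper's $a^N c$) satisfies an integral dependence relation over the numerator of the generator inside $R$, apply normality of $R$ once, and divide by the unit $a^{\mu}$ back in $R[1/a]$. The paper states this more tersely (working directly with $b,c\in R$ and leaving the exponent bookkeeping implicit), but the underlying argument is identical.
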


\begin{proof}
For $c$ and $b$ in $R$, if $c$ is integral over $\pair{b}$ in $R[1/a]$ we have a relation
$(a^Nc)^n + u_1 b (a^Nc)^{n-1} + \cdots + u_n b^n = 0$ with $u_1,\dots,u_n$
in $R$. Since $R$ is normal we have $a^Nc$ in $\pair{b}$.
\end{proof}

\begin{lemma}\label{without}
If $R$ is without zero divisors  then so is $R[1/a]$.
\end{lemma}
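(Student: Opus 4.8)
The plan is to argue directly from the concrete description of the localization, pushing any equation in $R[1/a]$ back to an equation in $R$, where the hypothesis on $R$ can be applied. First I would recall the standard facts: every element of $R[1/a]$ is represented by a fraction $x/a^m$ with $x \in R$, and such a fraction equals $0$ in $R[1/a]$ precisely when there is some $k$ with $a^k x = 0$ in $R$. Given these, suppose we have two elements $x/a^m$ and $y/a^n$ of $R[1/a]$ with $(x/a^m)(y/a^n) = 0$. Since the product is $xy/a^{m+n}$, the vanishing criterion gives an integer $k$ such that $a^k xy = 0$ in $R$.

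The key step is then to regroup this equation as $(a^k x)\,y = 0$ in $R$ and invoke the hypothesis that $R$ is without zero divisors, i.e. Definition (1). This yields the disjunction $a^k x = 0 \ \vee\ y = 0$. In the first case we get $x/a^m = 0$ in $R[1/a]$, because $a^k x = 0$; in the second case $y = 0$ gives $y/a^n = 0$ outright. In either branch one of the two factors vanishes, which is exactly the statement that $R[1/a]$ is without zero divisors.

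I do not expect a genuine obstacle here: the argument is a one-line reduction once the vanishing criterion for fractions is in hand. The only point that deserves attention is the constructive bookkeeping. The disjunction we need for $R[1/a]$ is produced verbatim by applying the without-zero-divisors property of $R$ to the single element $a^k x$ and $y$, so the conclusion is obtained without any case analysis requiring decidable equality. In particular, as with the gcd-tree constructions of the preceding section, $R$ need not be discrete for this lemma to hold.
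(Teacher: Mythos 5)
Your proof is correct and follows essentially the same route as the paper: push the equation $vw=0$ down to $a^k xy=0$ in $R$, regroup, and apply the without-zero-divisors hypothesis once to conclude that one of the fractions vanishes. The paper phrases the resulting disjunction as $a^k x=0$ or $a^k y=0$ rather than $a^k x=0$ or $y=0$, but this is an inessential variant of the same one-line argument.
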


\begin{proof}
We take two elements $v = b/a^n$ and $w = c/a^m$ of $R[1/a]$ with $b$ and $c$ in $R$. 
If we have $vw=0$ in $R[1/a]$ we have $a^pb c = 0$ in $R$ for some $p\geqslant 0$.
We have then $a^pb = 0$ in $R$ or $a^pc = 0$ in $R$, which implies that $v=0$ or $w=0$
in $R[1/a]$.
\end{proof}

 From now on in this section, we assume $R$ to be a normal ring without zero divisors.

\begin{lemma}\label{main}
If $P$ is integral over $\pair{Q}$ and is in $\pair{Q}$ in $R[1/a][X]$ then $a = 0$ or
$P$ is in $\pair{Q}$ in $R[X]$.
\end{lemma}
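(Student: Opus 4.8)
The plan is to transpose the argument of the domain case proved above, replacing the total fraction field $K$ by the localization $R[1/a]$ and the cancellation available in a domain by the hypothesis \emph{without zero divisors}. The power of $a$ that must be inverted will play the role of the regular element $c$ of that proof, and it is exactly this element that will produce the disjunct $a=0$. First I would clear denominators: since $P$ lies in $\pair{Q}$ in $R[1/a][X]$ there is $H\in R[1/a][X]$ with $P=QH$, and writing $H$ over a common denominator, together with the fact that the kernel of $R[X]\to R[1/a][X]$ consists of elements annihilated by a power of $a$, yields an identity $cP=Q\hat H$ in $R[X]$ with $c=a^M$ and $\hat H\in R[X]$. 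This is the precise analogue of the relation $cP=HQ$ in the domain proof.

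Next, as in that proof, I would multiply the integrality relation $P^n+A_1QP^{n-1}+\cdots+A_nQ^n=0$ by $c^n$ and substitute $cP=Q\hat H$. Setting $F(Y)=Y^n+A_1cY^{n-1}+\cdots+A_nc^n$ in $R[X][Y]$, a short computation gives $Q^n\,F(\hat H)=0$ in $R[X]$. Where the domain proof cancels $Q^n$, I instead invoke that $R[X]$ is without zero divisors (a standard consequence of $R$ being so, proved by induction on degrees) to obtain the disjunction $Q=0$ or $F(\hat H)=0$. In the branch $Q=0$ we get $cP=0$, and stripping the power $c=a^M$ coefficientwise via the ``without zero divisors'' property of $R$ gives $a=0$ or $P=0$, in either case the desired conclusion.

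The case $F(\hat H)=0$ is where I expect the main difficulty. Since $F$ is monic in $Y$, the root $\hat H$ produces a factorization $F(Y)=(Y-\hat H)\,S(X,Y)$ with $S$ monic in $Y$ over $R[X]$, so the linear polynomial $Y-\hat H$ divides $F(Y)$, whose $Y^{n-k}$-coefficient is $A_kc^k$. I would then apply Corollary \ref{kronecker1} in the specialized form where the coefficients of the dividend are taken in $R[X]$: each coefficient of $\hat H$ is a root of a monic polynomial whose $i$-th coefficient $p_i$ is weighted-homogeneous of degree $i$ in the $A_kc^k$, with $A_kc^k$ of weight $k$. The crucial bookkeeping point is that this weighted-homogeneity forces $p_i=c^i g_i$ with $g_i\in R[X]$, since every monomial $\prod_k(A_kc^k)^{e_k}$ with $\sum_k k\,e_k=i$ carries the factor $c^i$. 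Applying the evaluation homomorphism $R[X]\to R$, $X\mapsto 0$, which fixes $R$, then turns this into an integrality relation over $R$ exhibiting each coefficient of $\hat H$ as integral over the ideal $\pair{c}$.

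Finally, normality of $R$ gives that every coefficient of $\hat H$ lies in $\pair{c}=cR$, so $\hat H=c\hat H_1$ for some $\hat H_1\in R[X]$. Then $cP=Q\hat H=cQ\hat H_1$, whence $c\,(P-Q\hat H_1)=0$ in $R[X]$; stripping the power $c=a^M$ coefficientwise using again that $R$ is without zero divisors yields $a=0$ or $P=Q\hat H_1$, that is $a=0$ or $P\in\pair{Q}$. The two indispensable uses of ``without zero divisors'' are the passage from $Q^nF(\hat H)=0$ to $Q=0\vee F(\hat H)=0$ and the final stripping of $c$, while the delicate step is the interplay between Corollary \ref{kronecker1} and the descent $X\mapsto 0$ that brings the integrality relation from $R[X]$ down to $R$, where normality can be applied.
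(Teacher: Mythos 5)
Your proposal follows the paper's proof of Lemma \ref{main} essentially step for step: clear denominators to get $cP=Q\hat H$ with $c=a^N$, multiply the integral dependence relation by $c^n$ to obtain $Q^n F(\hat H)=0$, split using the without-zero-divisors hypothesis, factor $F(Y)=(Y-\hat H)S(X,Y)$ and apply Corollary \ref{kronecker1} together with normality to write $\hat H=c\hat H_1$, and finally strip $c$ from $c(P-Q\hat H_1)=0$ to reach $a=0$ or $P\in\pair{Q}$. The extra details you supply --- that $R[X]$ is without zero divisors, and the evaluation $X\mapsto 0$ converting the weighted-homogeneous integrality relation over $\pair{c}$ in $R[X]$ into one over $R$ --- are exactly the points the paper leaves implicit, so this is the same argument, correctly carried out.
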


\begin{proof}
We have $H$ in $R[X]$ such that $a^NP = QH$ for some $N$. We write
$c = a^N$. Since $P$ is integral over $\pair{Q}$ we have a relation
$$
P^n + A_1QP^{n-1} + \cdots + A_n Q^n = 0
$$
with $A_1,\dots,A_n$ in $R[X]$ and so 
$$
Q^n(H^n + A_1 c H^{n-1} + \cdots + A_n c^n) = 0
$$
in $R[X]$. Hence either $Q=0$ in which case $P=0$ is in $\pair{Q}$
or we can write
$$
Y^n + A_1 c Y^{n-1} + \cdots + A_n c^n = (Y - H)S(X,Y)
$$
where $S(X,Y)$ is a monic polynomial in $R[X][Y]$. Using the corollary of
Kronecker's Theorem \ref{kronecker1}, it follows that all coefficients of $H$ are integral over
$\pair{c}$ and
hence are in $\pair{c}$ since $R$ is normal. We can then write $H = cH_1$
and so $c(P-QH_1) = 0$ in $R[X]$. It follows that we have $c=0$, that is equivalent
to $a=0$, or $P=QH_1$ and hence $P$ is in $\pair{Q}$ in $R[X]$.
\end{proof}

\begin{lemma}\label{tree}
If we have $P$ and $Q$ in $R[X]$ and a constructible tree for $R$ such as, at all leaves
$S$ of this tree, we have $P$ in $\pair{Q}$ in $S[X]$. Then $P$ is in $\pair{Q}$ in $R[X]$.
\end{lemma}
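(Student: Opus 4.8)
The plan is to argue by induction on the size of the constructible tree, keeping in force the standing assumptions of this section: $R$ is normal and without zero divisors, and $P$ is integral over $\pair{Q}$. For the induction to close I read the statement as quantified over all normal rings without zero divisors at once, since the subtrees that appear will be trees for localizations of $R$ rather than for $R$ itself.

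The base case is immediate: if the tree is a single node, its only leaf is $R$, so the hypothesis that $P\in\pair{Q}$ at every leaf is literally the conclusion $P\in\pair{Q}$ in $R[X]$. For the inductive step, the root carries an element $a$, and the tree splits into a left subtree, which is a constructible tree for $R[1/a]$, and a right subtree, which is a constructible tree for $R/\sqrt{\pair{a}}$; both are strictly smaller, and the leaves of each subtree form a subset of the leaves of the whole tree, so $P\in\pair{Q}$ still holds at every leaf of each of them. By Lemma~\ref{locnormal} and Lemma~\ref{without}, $R[1/a]$ is again normal and without zero divisors, and $P$ remains integral over $\pair{Q}$ after applying the map $R[X]\to R[1/a][X]$. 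Hence the induction hypothesis applies to the left subtree and yields $P\in\pair{Q}$ in $R[1/a][X]$. At this point Lemma~\ref{main} delivers the dichotomy: either $P\in\pair{Q}$ in $R[X]$, and we are done, or $a=0$.

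The delicate point — and the step I expect to be the main obstacle — is how to exploit the right subtree, because $R/\sqrt{\pair{a}}$ need not be normal nor without zero divisors, so the induction hypothesis cannot be invoked on it directly. The resolution is that we reach the right subtree only along the branch $a=0$ produced by Lemma~\ref{main}. But $R$ is reduced by Lemma~\ref{reduced}, so $\sqrt{\pair{0}}$ is the zero ideal and therefore $R/\sqrt{\pair{a}}=R/\sqrt{\pair{0}}=R$. In this branch the right subtree is thus nothing but a strictly smaller constructible tree for the original ring $R$, to which the induction hypothesis applies and gives $P\in\pair{Q}$ in $R[X]$, closing the step.

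In short, the case split generated by Lemma~\ref{main} is precisely what makes the induction go through: the potentially badly behaved quotient ring $R/\sqrt{\pair{a}}$ is encountered only in the situation $a=0$, where it degenerates back to $R$, so every application of the induction hypothesis is to a tree over a normal ring without zero divisors. I would present the argument in exactly this order, with the reducedness observation isolated as the key reason the $a=0$ branch collapses cleanly.
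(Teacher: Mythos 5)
Your proof is correct and is essentially the paper's argument: both are inductions on the size of the constructible tree driven by Lemma~\ref{main}, with the quotient branch handled by observing that $a=0$ forces $R/\sqrt{\pair{a}}=R$ because $R$ is reduced (Lemma~\ref{reduced}). The only difference is organizational — the paper peels off the leftmost leaf and re-invokes the lemma on a smaller tree for the same ring $R$, whereas you recurse structurally at the root (which is why you must quantify the statement over all normal rings without zero divisors); unfolding your recursion reproduces exactly the paper's leftmost-branch computation.
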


\begin{proof}
 We look at the leftmost branch of this tree,
indexed by elements $a_1,\dots,a_l$, so that $S = S'[1/a_l]$ where $S' = R[1/(a_1\cdots a_{l-1})]$ is 
without zero divisors by Lemma \ref{without} and is normal by Lemma \ref{locnormal}. 
Using Lemma \ref{main}
we get that $a_l = 0$ in $S'$ or $P$ is in $\pair{Q}$ in $S'[X]$. In the second
case, we can shorten the leftmost branch to $a_1,\dots,a_l$ and get a smaller tree.
In the first case where $a_l = 0$ in $S'$, this means that the right son $S'/\pair{a}$ of $S'$
is equal to $S'$ and we also can shorten the tree. We conclude by tree induction.
\end{proof}

\begin{theorem}\label{normal}
If $R$ is normal and without zero divisors then so is $R[X]$.
\end{theorem}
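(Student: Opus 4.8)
The plan is to establish the two halves of the conclusion separately: that $R[X]$ is again without zero divisors is routine, while normality is the substantive part and I would route it entirely through the gcd tree of Section~1 together with Lemma~\ref{tree}.

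For the routine half, given $F,G$ in $R[X]$ with $FG=0$ I would fix degree bounds $m$ and $k$ for $F$ and $G$ and argue by induction on $m+k$. The coefficient of $X^{m+k}$ in the product is $f_m g_k$, which is $0$, so the hypothesis on $R$ supplies the disjunction $f_m=0$ or $g_k=0$; in either case one of the two polynomials acquires a strictly smaller degree bound (for the same underlying polynomial and the same zero product), and the induction hypothesis applies. The base case $m=k=0$ is exactly the hypothesis on $R$. This is constructively sound precisely because ``without zero divisors'' delivers a genuine, inspectable disjunction at each step.

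For normality, assume $P$ is integral over $\pair{Q}$ in $R[X]$ and aim to show $P\in\pair{Q}$. I would form the gcd tree of $P$ and $Q$ and appeal to Lemma~\ref{tree}, reducing the goal to checking $P\in\pair{Q}$ in $S[X]$ at each leaf $S$. At a leaf we are handed $A,B,G,P_1,Q_1$ in $S[X]$ with $G$ monic, $AP_1+BQ_1=1$, $P=GP_1$ and $Q=GQ_1$; moreover integrality is preserved by the localizations and quotients that build the tree, so $P$ stays integral over $\pair{Q}$ in $S[X]$, giving a relation $P^n+A_1QP^{n-1}+\cdots+A_nQ^n=0$. Substituting the factorizations factors out $G^n$, and since $G$ is monic, $G^n$ is a non-zero-divisor in $S[X]$, so it cancels to leave $P_1^n+A_1Q_1P_1^{n-1}+\cdots+A_nQ_1^n=0$; in particular $P_1^n\in\pair{Q_1}$.

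The finishing move exploits comaximality: raising $AP_1+BQ_1=1$ to the $n$-th power isolates the single term $A^nP_1^n$ with every other summand divisible by $Q_1$, so $1\in\pair{P_1^n}+\pair{Q_1}=\pair{Q_1}$. Hence $Q_1$ is a unit in $S[X]$, whence $G\in\pair{Q}$ and therefore $P=GP_1\in\pair{Q}$ at the leaf; Lemma~\ref{tree} then transports this back to $P\in\pair{Q}$ in $R[X]$. I expect the delicate point — and the reason one cannot simply reuse the fraction-field argument of the integral-domain corollary — to be that a leaf ring $S$ need not be a domain, so there is no denominator to invert in a field of fractions. What rescues the argument over the non-domain leaves is exactly the structure the gcd tree manufactures: the monic gcd $G$ provides a regular, hence cancelable, factor $G^n$, and the B\'ezout relation $AP_1+BQ_1=1$ plays the role that coprimality over a field would play. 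Making sure these two ingredients are genuinely available at every leaf, and that integrality descends along the tree, is where the care lies.
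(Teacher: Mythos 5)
Your proposal is correct and follows essentially the same route as the paper: form the gcd tree of $P$ and $Q$, cancel the monic factor $G^n$ from the integral dependence relation at each leaf, use the B\'ezout identity $AP_1+BQ_1=1$ together with $P_1^n\in\pair{Q_1}$ to conclude $Q_1$ is a unit, and transport back via Lemma~\ref{tree}. The only addition is your explicit inductive verification that $R[X]$ is without zero divisors, which the paper leaves implicit; that part is also correct.
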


\begin{proof}
We take $P$ and $Q$ in $R[X]$ and we assume that we have a relation
$$
P^n + A_1QP^{n-1} + \cdots + A_n Q^n = 0
$$
with $n\geqslant 1$ and $A_1,\cdots,A_n$ in $R[X]$. We have to show that $P$ is in $\pair{Q}$
in $R[X]$.

We look now at the gcd tree of $P$ and $Q$ as defined in the first section.
At all leaves $S$ of this tree, we have $P_1,Q_1,G,A,B$
in $S[X]$ satisfying
$$
P = GP_1,~~Q = GQ_1,~~AP_1+BQ_1 = 1
$$
in $S[X]$ and $G$ is monic. Since $G$ is monic and
$$
P^n + A_1QP^{n-1} + \cdots + A_n Q^n = G^n(P_1^n + A_1Q_1P_1^{n-1} + \cdots + A_n Q_1^n) = 0
$$
we have
$$
P_1^n + A_1Q_1P_1^{n-1} + \cdots + A_n Q_1^n = 0
$$
and $Q_1$ divides $P_1^n$. With $AP_1+BQ_1=1$ this implies that $Q_1$ is a unit
and so $P$ is in $\pair{Q}$ in $S[X]$. We can now apply Lemma \ref{tree}.
\end{proof}

\section{Normal ring}

We say that the ring is {\em locally}
without zero divisors \cite[Lemma VIII-3.2]{LQ} if, and only if, whenever $ab=0$ then there exists $u$ such that
$ua=0$ and $(1-u)b=0$. 
These rings are often called $pf$-rings. 
In this note, only the notion of rings without zero divisors
and locally without zero divisors will play a role.

\begin{lemma}\label{local}
If $R$ is normal then $R$ is locally without zero divisors.
\end{lemma}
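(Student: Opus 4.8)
The plan is to extract the separating element $u$ directly from the normality hypothesis, applied to a well-chosen integral dependence, and then to kill a leftover nilpotent term using that a normal ring is reduced (Lemma \ref{reduced}).

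So suppose $ab = 0$; I must produce $u$ with $ua = 0$ and $(1-u)b = 0$. The crucial observation is that $b$ is integral over the principal ideal $\pair{a+b}$. Indeed, since $ab = 0$ we have $b^2 = b^2 + ab = b(a+b)$, so $b$ satisfies the monic relation $b^2 - (a+b)\,b = 0$, whose coefficients $-(a+b)$ and $0$ lie in $\pair{a+b}^1$ and $\pair{a+b}^2$ respectively. First I would apply normality to this relation to conclude $b \in \pair{a+b}$, that is, $b = u(a+b) = ua + ub$ for some $u \in R$.

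Rearranging gives $(1-u)\,b = ua$; write $r$ for this common value. Then $r^2 = (ua)\big((1-u)\,b\big) = u(1-u)\,ab = 0$, so $r$ is nilpotent, and since $R$ is reduced by Lemma \ref{reduced} we get $r = 0$. Hence $ua = 0$ and $(1-u)\,b = 0$, which is exactly the witness required for $R$ to be locally without zero divisors.

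I expect the whole argument to be short, the one genuinely inventive step being the recognition that $ab = 0$ forces the integral dependence $b^2 = (a+b)\,b$ over the ideal $\pair{a+b}$; everything after invoking normality is a two-line manipulation, and reducedness is what lets me discard the error term $r$ rather than having to prove that $u$ is literally idempotent.
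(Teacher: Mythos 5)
Your proposal is correct and follows essentially the same route as the paper: observe $b^2-(a+b)b=0$, use normality to write $b=u(a+b)$, set $r=ua=(1-u)b$, check $r^2=0$, and conclude $r=0$ by reducedness (Lemma \ref{reduced}). The only cosmetic difference is that the paper squares via $ua^2=(1-u)ba=0$ while you compute $r^2=u(1-u)ab=0$ directly; the argument is the same.
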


\begin{proof}
If $ab = 0$ then $b^2 -(a+b)b=0$ so $b$ is integral over $\pair{a+b}$ and so is in $\pair{a+b}$.
We can write $b = (a+b)u$ and so $ua = (1-u)b$. This implies $ua^2 = (1-u)ba=0$ and so
$ua=(1-u)b=0$ since $R$ is reduced.
\end{proof}

\begin{theorem}
If $R$ is normal then so is $R[X]$.
\end{theorem}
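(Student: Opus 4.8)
The plan is to re-run the proof of Theorem~\ref{normal} almost verbatim, replacing each appeal to ``$R$ without zero divisors'' by the weaker information of Lemma~\ref{local}, that $R$ is a $pf$-ring (and reduced, by Lemma~\ref{reduced}, so that the gcd tree is available). First I would record that the two ring operations occurring on the leftmost branch of a constructible tree preserve the hypotheses. Localizations stay normal by Lemma~\ref{locnormal}, and localizations stay $pf$-rings by the computation of Lemma~\ref{without}, modified so that from $a^pbc=0$ in $R$ one extracts, via the $pf$ property, an element $u$ with $ua^pb=0$ and $(1-u)c=0$ whose image in $R[1/a]$ witnesses the splitting. Hence every leftmost-branch ring $S'=R[1/(a_1\cdots a_{l-1})]$ is again a normal $pf$-ring.

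Next I would carry out the leaf computation of Theorem~\ref{normal} unchanged, noting that it never used a domain hypothesis. Starting from a relation $P^n+A_1QP^{n-1}+\cdots+A_nQ^n=0$, I form the gcd tree of $P$ and $Q$ over $R$. At each leaf $S$ we have $P=GP_1$, $Q=GQ_1$, $AP_1+BQ_1=1$ with $G$ monic; monicity lets the relation descend to $P_1^n+A_1Q_1P_1^{n-1}+\cdots=0$, so $Q_1$ divides $P_1^n$, and expanding $(AP_1+BQ_1)^n$ forces $Q_1$ to be a unit. Thus $P\in\pair{Q}$ in $S[X]$ at every leaf.

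It remains to glue, i.e. to supply the analogues of Lemma~\ref{main} and Lemma~\ref{tree} over a merely $pf$ base. The proof of Lemma~\ref{main} uses the domain property only in two disjunctions, $Q^n(\cdots)=0\Rightarrow Q=0\ \text{or}\ (\cdots)=0$, and $c(P-QH_1)=0\Rightarrow c=0\ \text{or}\ P=QH_1$; the factorization $(Y-H)S(X,Y)$, the appeal to Corollary~\ref{kronecker1}, and the passage $H=cH_1$ from normality are untouched. The key observation is that in a $pf$-ring each such disjunction $xy=0\Rightarrow x=0\ \text{or}\ y=0$ is realised by a single node expansion of the constructible tree: choosing the splitting element $u$ (with $ux=0$ and $(1-u)y=0$), the left son $S'[1/u]$ forces $x=0$, while in the right son $S'/\sqrt{\pair{u}}$ we have $u\equiv 0$, hence $1-u\equiv 1$ and $y=0$. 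So every global ``or'' is converted into a tree branching whose two sons carry the two alternatives, and the pruning of Lemma~\ref{tree} (shorten when $P\in\pair{Q}$, collapse the right son when the relevant element vanishes) then runs by induction on the size of the enlarged tree.

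The hard part will be the bookkeeping around the second disjunction: $c(P-QH_1)=0$ in $R[X]$ with $c\in R$ is really a finite system $c\,d_j=0$ over the coefficients $d_j$ of $P-QH_1$ (and similarly for the factor $Q$), so the single $pf$-splitting must be iterated node by node over these finitely many relations. I would therefore have to check that the resulting adjunction of splitting nodes keeps the tree finite, so that the induction terminates, and that $P\in\pair{Q}$ glues back along the comaximal localizations produced — a routine local-global principle for membership in the finitely generated ideal $\pair{Q}$. This iterated splitting in a tree, standing in for the localizations at minimal primes of classical mathematics, is the real content; once it is in place the conclusion $P\in\pair{Q}$ in $R[X]$ is immediate.
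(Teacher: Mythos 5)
Your overall strategy is the paper's own: invoke Lemma \ref{local} to trade ``without zero divisors'' for the $pf$ property, re-run the proof of Theorem \ref{normal}, convert each use of the disjunction $xy=0\Rightarrow x=0$ or $y=0$ into a splitting of the current ring by means of the element $u$ with $ux=0$ and $(1-u)y=0$, and glue at the end. The one genuine problem is the splitting you chose. You take as right son the quotient $S'/\sqrt{\pair{u}}$, i.e.\ you enlarge the \emph{constructible} tree, but you then appeal to ``a routine local-global principle for comaximal localizations'' to glue. These two things do not match: membership in a finitely generated ideal does \emph{not} glue along a constructible partition (already $2\in\pair{4}$ holds in $\ints[1/2]$ and in $\ints/\sqrt{\pair{2}}$ but not in $\ints$). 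Gluing along a constructible tree is precisely the content of Lemma \ref{tree}, whose proof goes through Lemma \ref{main} and hence needs the without-zero-divisors hypothesis on the leftmost-branch ring --- the very hypothesis you are trying to dispense with. So, as written, your final gluing step is either unavailable or circular, and this is exactly the point you yourself flag as ``the hard part''.

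The repair --- and what the paper actually does --- is to observe that since $ux=0$ and $(1-u)y=0$, the two \emph{localizations} $S'[1/u]$ and $S'[1/(1-u)]$ already realise the two alternatives ($x=0$ in the first, $y=0$ in the second), and they are comaximal because $u+(1-u)=1$. Splitting the current ring $R[1/v]$ into $R[1/vu]$ and $R[1/v(1-u)]$ keeps every piece a localization of $R$, preserves normality and the $pf$ property by Lemma \ref{locnormal} and your adaptation of Lemma \ref{without}, and terminates since each relation $xy=0$ encountered in the (finite) proof of Theorem \ref{normal} contributes one binary split. One ends with $u_1,\dots,u_m$ satisfying $\pair{u_1,\dots,u_m}=1$ and $P\in\pair{Q}$ in each $R[1/u_j][X]$, and for such a comaximal family of localizations the local-global principle for membership in $\pair{Q}$ really is routine. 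With this single change --- right son a localization at $1-u$ rather than a quotient by $\sqrt{\pair{u}}$ --- your argument coincides with the paper's proof.
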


\begin{proof}
By Lemma \ref{local}, $R$ is locally without zero divisors. Assume then that a polynomial $P\in R[X]$
is integral over $\pair{Q}$ in $R[X]$.
Following the proof of Theorem \ref{normal}, each time we use $ab=0\rightarrow a=0 \hbox{ or } b=0$, we split the ``current ring $R[1/v]$'' 
in two rings $R[1/vu]$ and $R[1/v(1-u)]$ by using an $u$ such that $ua=0$ and $(1-u)b=0$. We find finally  
$u_1,\dots,u_m$ in~$R$ such that $\pair{u_1,\dots,u_m} = 1$ and~$P$ belongs to $\pair{Q}$
in each $R[1/u_j][X]$. It follows that $P$ is in $\pair{Q}$ as required.
\end{proof}

\section{The ring $\Tr{R}{f}$}

 Let $f$ be a monic polynomial in $R[X]$. We can consider the extension $S = R[X]/\pair{f}$ where
$f$ has a root $x$. We let $f_X$ be the formal derivative of $f$ w.r.t. $X$,
and we define $\Tr{R}{f}$ to be the localization $S[1/f_X(x)]$. This construction
is important to study the properties of henselization of a local ring \cite{raynaud}. 

 The goal of this section is to show that $\Tr{R}{f}$ is normal whenever $R$ is normal. As in the
previous section, we can first assume that $R$ is without zero divisors, and then use that a normal
ring is locally without zero divisors to conclude. So in the rest of the section, we assume that
$R$ is a normal ring without zero divisors.

 If $f = gh$ is the product of two monic polynomials $g$ and $h$
we have $\Tr{R}{f}$ isomorphic to $\Tr{R}{g}[1/h(x)]\times \Tr{R}{h}[1/g(x)]$.
This remark is important since by using Lemma \ref{locnormal} we can reason by induction on the
degree of $f$ to show that $\Tr{R}{f}$ is normal if $R$ is normal.


\begin{lemma}\label{crucial}
If $R$ is normal without zero divisors, and $a$ in $R$ and $T = R[1/a]$
and $f = g f_1$ with $g$ and $f_1$ monic in $T[X]$
then we have $g$ and $f_1$ in $R[X]$
or $a=0$.
\end{lemma}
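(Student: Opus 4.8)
The plan is to combine Kronecker's Theorem with the normality of $R$, and to invoke the hypothesis that $R$ is without zero divisors only at the very end, in order to split off the degenerate case $a=0$. Write $f = X^n + b_1 X^{n-1} + \cdots + b_n$ with all $b_i$ in $R$. Since $f_1$ is monic, $g$ divides $f$ in $T[X]$, and since $g$ is monic, $f_1$ divides $f$ in $T[X]$; in both cases the divisor and the dividend are monic. Applying Theorem \ref{kronecker} over the ring $T$, every coefficient of $g$ and of $f_1$ is integral over the subring of $T$ generated by $b_1,\dots,b_n$, hence integral over the image of $R$ in $T$.

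The heart of the argument is then the following claim: if $c$ in $T$ is integral over (the image of) $R$, then $a=0$ or $c$ lies in the image of $R$. To prove it, I would write $c = b/a^N$ with $b$ in $R$ and clear denominators in an integral relation $c^d + e_1 c^{d-1} + \cdots + e_d = 0$ (with $e_i$ in $R$) by multiplying by $a^{Nd}$; this yields
$$
b^d + e_1 a^N b^{d-1} + e_2 a^{2N} b^{d-2} + \cdots + e_d a^{Nd} = 0
$$
in $T$. Since this is an equality between elements of $R$ that holds in $T=R[1/a]$, there is an $M$ with $a^M$ times the left-hand side equal to $0$ in $R$.

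At this point the hypothesis that $R$ is without zero divisors enters: from $a^M Z = 0$ we obtain $a=0$ (iterating $xy=0 \rightarrow x=0 \vee y=0$ to pass from $a^M=0$ to $a=0$) or $Z=0$, where $Z$ denotes the displayed left-hand side. In the case $Z=0$, the displayed relation is a genuine integral dependence of $b$ over the ideal $\pair{a^N}$, since the coefficient of $b^{d-j}$ is $e_j a^{jN}$, which lies in $\pair{a^N}^j$. Because $R$ is normal we conclude $b \in \pair{a^N}$, say $b = a^N s$ with $s$ in $R$, whence $c = b/a^N = s$ in $T$, so $c$ is in the image of $R$. This proves the claim.

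Finally, I would apply the claim to each of the finitely many coefficients of $g$ and of $f_1$. Each application returns either $a=0$ or the membership of that coefficient in the image of $R$; distributing the shared disjunct $a=0$ over this finite conjunction (which is constructively valid) gives that $a=0$, or else every coefficient of $g$ and $f_1$ lies in $R$, that is, $g$ and $f_1$ are in $R[X]$. The main obstacle is precisely the extra factor $a^M$ produced by clearing denominators: it blocks a direct appeal to normality, and it is exactly this factor that the \emph{without zero divisors} hypothesis absorbs into the alternative $a=0$.
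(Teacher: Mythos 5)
Your proof is correct and follows essentially the same route as the paper's: apply Kronecker's Theorem~\ref{kronecker} to get that the coefficients of $g$ and $f_1$ are integral over (the image of) $R$, then use normality together with the without-zero-divisors hypothesis to conclude membership in $R$ or $a=0$. The paper compresses the second step into one sentence; your claim about elements of $T$ integral over $R$, with the clearing of denominators, the splitting of the spurious factor $a^M$ via $xy=0\rightarrow x=0\vee y=0$, and the constructive distribution of the disjunct $a=0$ over the finitely many coefficients, is exactly the intended deciphering of that sentence.
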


\begin{proof}
Using Kronecker's Theorem \ref{kronecker}, each coefficient of $g$ and $f_1$ is
integral over $R$. Since $R$ is normal and without zero divisors, this implies
that $a=0$ or $g$ and $f_1$ are in $R[X]$.
\end{proof}

 We have the trace map $\tr:S\rightarrow R$. If we introduce the splitting algebra \cite[Definition~III-4.1]{LQ} of $f$ and write $f = (X-x_1)\cdots (X-x_{n})$ with $x = x_1$ then the trace of $h(x)\in S$ is $h(x_1)+\cdots + h(x_{n})$.
If $v=h(x)$ in $S$ is integral over $\pair{a}_S$ with $a$ in $R$ then all elements $h(x_1),\dots,h(x_{n})$ are integral over $\pair{a}_R$ and so $\tr(v)$ is also integral over $\pair{a}_X$ and so is in $\pair{a}_X$ 
since~$R$ is normal. 
Also if we write $f(X) - f(Y) = (X-Y) g(X,Y) = \sum_i g_{i}(Y)X^i$, we have $f_X(T)=g(T,T)$ and $g(x_1,x_j)=0$ for $j\neq 1$. So we get for all $v=h(x)=h(x_1)$ in~$S$ 
$$
f_X(x)h(x) = g(x_1,x_1) h(x_1) = \som_i g_{i}(x_1) h(x_1)\; x_1^i
$$
and for $j\neq 1$ 
$$
0 =  g(x_1,x_j) h(x_j) = \som_i g_{i}(x_j) h(x_j)\; x_1^i
$$
so that, by summation, we get Tate's formula \cite[Chapter VII, 1]{raynaud}
$$
f_X(x) v = \som_i \tr(g_i(x)v)\; x^i.
$$
Since each $g_i(x)$ is integral over $R$, we can state the following lemma.

\begin{lemma}\label{Tate}
If $R$ is normal, if $a$ in $R$ and if $v$ in $S$ is integral over $\pair{a}_S$ then $f_X(x)v$ is in $\pair{a}_S$.
\end{lemma}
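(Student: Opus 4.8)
The plan is to read the conclusion directly off Tate's formula
$$
f_X(x)\, v = \som_i \tr(g_i(x) v)\; x^i,
$$
which has already been established, by showing that each coefficient $\tr(g_i(x) v)$ lies in $\pair{a}_R$. Granting this, every summand $\tr(g_i(x) v)\, x^i$ belongs to $\pair{a}_S$, because $\tr(g_i(x) v)\in\pair{a}_R\subseteq\pair{a}_S$ and $x^i\in S$; hence the whole sum $f_X(x) v$ lies in $\pair{a}_S$, as required. So the work reduces to controlling the traces.

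First I would check that each product $g_i(x) v$ is again integral over $\pair{a}_S$. By hypothesis $v$ is integral over the ideal $\pair{a}_S$, while $g_i(x)$ is an element of $S$ that is integral over $R$, as recorded just before the statement (the $g_i(x)$ are polynomial expressions in the roots of the monic $f$). Multiplying an element integral over the ideal $\pair{a}_S$ by an arbitrary element of $S$ keeps it integral over $\pair{a}_S$ — equivalently, the integral closure of $\pair{a}_S$ is an ideal of $S$ — and the integrality of $g_i(x)$ over $R$ is exactly what makes this explicit: splicing the monic relation for $g_i(x)$ over $R$ with the integral relation for $v$ over $\pair{a}_S$ produces an integral relation for $g_i(x) v$ over $\pair{a}_S$.

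Next I would invoke the trace computation already carried out for elements of $S$ integral over $\pair{a}_S$. Writing $g_i(x) v = h_i(x)$ and passing to the splitting algebra, each conjugate $h_i(x_j)$ is integral over $\pair{a}_R$, so the trace $\tr(g_i(x) v) = h_i(x_1)+\cdots+h_i(x_n)$ is integral over $\pair{a}_R$ as well, and therefore lies in $\pair{a}_R$ because $R$ is normal. Substituting this into Tate's formula then closes the argument. The crux is the first step: one must be sure that integrality is preserved over the full ideal $\pair{a}_S$ — not merely over the subring $R$ — when multiplying by $g_i(x)$, and that the subsequent descent of the trace to $\pair{a}_R$ genuinely uses both the integrality of the $g_i(x)$ over $R$ and the normality of $R$; everything else is bookkeeping around the displayed formula.
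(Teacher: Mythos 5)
Your proof follows exactly the paper's intended argument: the lemma is stated as an immediate consequence of the preceding discussion (Tate's formula, the integrality of each $g_i(x)$ over $R$, and the observation that the trace of an element of $S$ integral over $\pair{a}_S$ lands in $\pair{a}_R$ by normality of $R$), and you assemble precisely these ingredients in the same order. The one step you make explicit that the paper leaves implicit --- that $g_i(x)v$ is again integral over $\pair{a}_S$ --- is correct and is exactly what the paper's closing remark ``since each $g_i(x)$ is integral over $R$'' is pointing at.
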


\begin{theorem}
If $R$ is normal then $\Tr{R}{f}$ is normal.
\end{theorem}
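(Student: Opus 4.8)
The plan is to reduce the general case to the case where $R$ is without zero divisors, exactly as in the passage from Theorem~\ref{normal} to the following theorem about $R[X]$. First I would invoke Lemma~\ref{local} to get that any normal $R$ is locally without zero divisors, so it suffices to prove the result assuming $R$ is a normal ring without zero divisors, and then to recover the general statement by splitting the ring whenever the argument uses the ``$ab=0\Rightarrow a=0$ or $b=0$'' dichotomy, collecting finitely many comaximal localizations $R[1/u_j]$ in which $P$ lies in the relevant ideal. So the real content is the case of $R$ normal without zero divisors, which is precisely the standing assumption of this section.

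Next I would set up the integrality hypothesis concretely. Suppose $v \in \Tr{R}{f}$ is integral over $\pair{b}$ for some $b$; since $\Tr{R}{f} = S[1/f_X(x)]$, I may clear denominators and assume $v = h(x)$ with $h \in R[X]$ and that we want to show $v \in \pair{b}$ in $\Tr{R}{f}$. The key tool is Tate's formula together with Lemma~\ref{Tate}: if $v$ is integral over $\pair{a}_S$ then $f_X(x)v$ is integral over $\pair{a}_S$ and in fact lies in $\pair{a}_S$ because each $g_i(x)$ is integral over $R$ and $R$ is normal. Since we invert $f_X(x)$ in $\Tr{R}{f}$, multiplication by $f_X(x)$ is an isomorphism on $\Tr{R}{f}$, so moving integrality statements between $S$ and $\Tr{R}{f}$ through the factor $f_X(x)$ is harmless. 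The idea is therefore to transport an integrality relation for $v$ in $\Tr{R}{f}$ back to an integrality relation in $S$ over the corresponding ideal, apply Lemma~\ref{Tate} to conclude membership after multiplication by $f_X(x)$, and then divide by the now-invertible $f_X(x)$.

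I would also exploit the inductive structure flagged at the start of the section: if $f = gh$ with $g,h$ monic then $\Tr{R}{f} \cong \Tr{R}{g}[1/h(x)] \times \Tr{R}{h}[1/g(x)]$, and by Lemma~\ref{locnormal} a localization of a normal ring is normal, so normality of $\Tr{R}{f}$ would follow from normality of $\Tr{R}{g}$ and $\Tr{R}{h}$ by induction on $\deg f$. Lemma~\ref{crucial} is what lets me perform this reduction constructively: whether or not $f$ genuinely factors into monic pieces can be decided only after passing to a localization $R[1/a]$, and Lemma~\ref{crucial} guarantees that any monic factorization over $R[1/a]$ already lives over $R[X]$ unless $a = 0$. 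So I would build a constructible (gcd-style) tree on the coefficient data of $f$, and at each leaf either $f$ is irreducible in the relevant sense, reducing to the base case handled by Tate's formula, or $f$ factors and induction on degree applies. A product ring and its factors being normal iff each factor is normal lets me reassemble across the factorization branches.

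The base case of the induction — $f$ admitting no nontrivial monic factor, so the tree does not branch further on $f$ — is where the real work sits, and the main obstacle is organizing it constructively. Here I cannot appeal to a generic point or a minimal prime to certify ``irreducibility''; instead the combination of Lemma~\ref{crucial} (controlling factorizations up to inverting some $a$) and the tree induction must do the bookkeeping, and the delicate point is showing that the process terminates and that the membership $f_X(x)v \in \pair{b}$ obtained from Lemma~\ref{Tate} descends to $v \in \pair{b}$ in $\Tr{R}{f}$ after dividing by the invertible element $f_X(x)$, uniformly across all leaves. Once that descent is checked at every leaf, Lemma~\ref{tree} (or its analogue for this construction) glues the local conclusions into the global statement that $v \in \pair{b}$ in $\Tr{R}{f}$, completing the proof.
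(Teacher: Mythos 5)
Your overall architecture is right in outline: reduce to the case of a normal ring without zero divisors via Lemma~\ref{local}, use Tate's formula through Lemma~\ref{Tate}, and induct on $\deg f$ via the decomposition $\Tr{R}{f}\cong \Tr{R}{g}[1/f_1(x)]\times \Tr{R}{f_1}[1/g(x)]$ controlled by Lemma~\ref{crucial}. But there is a genuine gap at the heart of your base case. Lemma~\ref{Tate} only applies when the element $v\in S$ is integral over $\pair{a}_S$ for $a$ \emph{in the base ring} $R$: its proof goes through the trace map $\tr:S\to R$ and the normality of $R$, so it says nothing about integrality over $\pair{q(x)}$ for a general $q(x)\in S$. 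You propose to ``transport an integrality relation for $v$ in $\Tr{R}{f}$ back to $S$, apply Lemma~\ref{Tate}, and divide by $f_X(x)$,'' but you never explain how the generator $q(x)$ of the ideal gets replaced by an element of $R$. That replacement is exactly what the paper's device accomplishes and what your sketch is missing.

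Concretely, the constructible tree in the paper is the gcd tree of $q$ \emph{and} $f$ --- it depends on the element $q$ whose ideal you are testing, not merely on the coefficient data of $f$ --- and the dichotomy at a leaf is not ``$f$ irreducible versus $f$ factors'' (a condition you cannot reach constructively and which plays no role) but ``$\gcd(q,f)=1$ versus $\gcd(q,f)=g$ a proper monic factor of $f$.'' In the first case the B\'ezout identity yields $c=Af+Bq$ with $c=(a_1\cdots a_n)^m\in R$, hence $c=B(x)q(x)$ in $S$; multiplying the integral dependence relation by $B(x)^n$ shows $p(x)B(x)$ is integral over $\pair{c}_S$ with $c\in R$, and only now does Lemma~\ref{Tate} apply, giving $f_X(x)^N p(x)B(x)=c\,l(x)$ and then $c\bigl(p(x)f_X(x)^N-l(x)q(x)\bigr)=0$, so that $c=0$ (shorten the tree) or $p(x)\in\pair{q(x)}$ in $\Tr{R}{f}$. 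In the second case Lemma~\ref{crucial} gives either a genuine monic factorization over $R$ (induct on degree) or $a=0$ (shorten the tree). Without the identity $c=B(x)q(x)$ converting the generator from $S$ to $R$, your appeal to Lemma~\ref{Tate} does not go through, and your termination argument has no well-founded measure: the paper's measure is the pair (degree of $f$, size of the gcd tree of $q$ and $f$), which your $f$-only tree cannot supply.
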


\begin{proof}
 We assume given $p,q$ in $R[X]$ such that $p(x)$ is integral over $\pair{q(x)}$ in $S$
so that we have a relation $p(x)^n + u_1(x)q(x)p(x)^{n-1} + \cdots + u_n(x)q(x)^n = 0$.
The goal is to show that $p(x)$ is in $q(x)S[f_X(x)^{-1}]$.

 We look at the gcd tree of $q$ and $f$, and the leftmost branch of this tree. 
At the leaf of this branch we have a list of elements that we force to be invertible
$a_1,\dots,a_n$ and  $q_1,f_1,g,A,B$ in $R[a^{-1}]$ with $a= a_1\dots a_n$ such that
$$
f = gf_1~~~~q=gq_1~~~~~1 = Af_1+Bq_1.
$$
Furthermore $g$ and hence $f_1$ are monic since $f$ is monic.

\medskip
 If $f = f_1$ we have $g=1$ and $q = q_1$. In this case we have $c = Af + Bq$ where $c = (a_1\dots a_n)^m$
for some $m$ and so we have $c = B(x) q(x)$ in $S = R[X]/\pair{f}$.
We then have a relation
$$
(p(x)B(x))^n + u_1(x)c(p(x)B(x))^{n-1} + \cdots + u_n(x)c^n = 0
$$
and hence, by Lemma \ref{Tate}, we get that $p(x)B(x)$ is in $\pair{c}$
in $S[f_X(x)^{-1}]$. Hence we have $l(x)$ in~$S$ and $N$
such that 
$$
f_X(x)^Np(x)B(x) = c l(x) = q(x)B(x)l(x)
$$
and so
$$
c(p(x)f_X(x)^N - l(x)q(x)) = 0.
$$
We have then $c =0$ or $p(x)$ is in  $\pair{q(x)}$ in $R[f_X(x)^{-1}]$. So either
we have the desired conclusion that $p(x)$ is in  $\pair{q(x)}$ or
we have $a_n = 0$ in $R[1/(a_1\cdots a_{n-1})]$ and we can shorten the computation
tree of the gcd of $f$ and $q$.

\medskip  If $f$ and $f_1$ have not the same degree, we have found
a proper decomposition $f = gf_1$ of $f$ in $R[1/a]$ with $a= a_1\cdots a_n$.
In this case, since $R$ is normal, by Lemma \ref{crucial}, we have two subcases
\begin{itemize}
\item either $g$ and $f_1$ are in $R[X]$
and we can conclude by induction on the degree of $f$, 
using  that $R_{\{f\}}$ isomorphic to $\Tr{R}{g}[1/f_1]\times \Tr{R}{f_1}[1/g]$, 
\item or $a=0$ 
and as in the previous case, we can shorten the computation
tree of the gcd of $f$ and $q$.
\end{itemize}
\end{proof}

 As in \cite[VIII-4.4]{LQ}, we say that a ring is a Pr\"ufer ring if it is arithmetic and reduced.
A coherent Pr\"ufer ring is an arithmetic $pp$-ring.

\begin{corollary}
If $R$ is a Pr\"ufer ring of Krull dimension $\leqslant 1$ then so is $\Tr{R}{f}$.
\end{corollary}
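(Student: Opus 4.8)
The plan is to deduce the corollary from the theorem just proved, that $\Tr{R}{f}$ is normal whenever $R$ is, together with a dimension count and the equivalence, for rings of Krull dimension $\le 1$, between being Prüfer and being normal. First I would observe that a Prüfer ring is in particular normal: an arithmetic reduced ring becomes, after the localisations used to make its finitely generated ideals principal, a reduced chain ring, hence a valuation domain, and valuation domains are integrally closed; since integral closure is a local notion, $R$ is normal. Applying the preceding theorem then gives that $\Tr{R}{f}$ is normal, and by Lemma \ref{reduced} it is in particular reduced.

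Next I would bound the Krull dimension. Since $f$ is monic, $S = R[X]/\pair{f}$ is free of finite rank $\deg f$ as an $R$-module, so $R \subseteq S$ is a finite, hence integral, extension; therefore $\dim S = \dim R \le 1$. As $\Tr{R}{f} = S[1/f_X(x)]$ is a localisation of $S$, we obtain $\dim \Tr{R}{f} \le \dim S \le 1$. Both ingredients here, namely the invariance of Krull dimension under integral extensions and its non-increase under localisation, are available constructively in \cite{LQ}, so this paragraph should be essentially bookkeeping.

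It then remains to pass from \emph{normal of dimension $\le 1$} back to \emph{Prüfer of dimension $\le 1$}, that is, to know that a normal ring of Krull dimension $\le 1$ is arithmetic; combined with reducedness this yields that $\Tr{R}{f}$ is a Prüfer ring. This implication is the heart of the matter and the step I expect to be the main obstacle: in contrast to the dimension count and the appeal to the normality theorem, it is a genuine structural statement (a normal local ring of dimension $\le 1$ being a valuation ring) whose constructive proof must dispense with minimal and maximal primes, exactly in the spirit of the rest of the paper. I would therefore invoke it from \cite{LQ}, where the Prüfer rings of dimension $\le 1$ are characterised as the normal rings of dimension $\le 1$. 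Assembling the three steps, $R$ normal, then $\Tr{R}{f}$ normal of dimension $\le 1$, hence $\Tr{R}{f}$ Prüfer of dimension $\le 1$, completes the proof.
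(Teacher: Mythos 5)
Your overall architecture matches the paper's: apply the normality theorem to get $\Tr{R}{f}$ normal, bound the Krull dimension via the integral extension $R\subseteq S$ and the localization, and then convert back using a characterization of Pr\"ufer rings of dimension $\leqslant 1$. The gap is in that last step. The characterization you invoke, ``a ring of Krull dimension $\leqslant 1$ is Pr\"ufer if and only if it is normal,'' is not the theorem available in \cite{DLQS,LQ}, and it is not correct as stated. The equivalence the paper actually cites is: a ring is normal, \emph{coherent} and of Krull dimension $\leqslant 1$ if and only if it is Pr\"ufer of Krull dimension $\leqslant 1$. Coherence cannot be dropped: already for local integral domains, a one-dimensional integrally closed local domain need not be a valuation domain without a finiteness hypothesis such as Noetherianity or coherence (this is exactly the classical situation in which ``integrally closed Noetherian local of dimension one'' yields a discrete valuation ring). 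So ``normal of dimension $\leqslant 1$ implies arithmetic'' is not available, and your third step does not go through.

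The repair is cheap and is precisely what the paper does: reading the same equivalence in the other direction, the hypothesis that $R$ is Pr\"ufer of dimension $\leqslant 1$ gives that $R$ is coherent; since $f$ is monic, $S=R[X]/\pair{f}$ is a finite free $R$-module, hence coherent, and its localization $\Tr{R}{f}=S[1/f_X(x)]$ is coherent as well. With coherence of $\Tr{R}{f}$ in hand, normality together with dimension $\leqslant 1$ does give that it is Pr\"ufer. (Your first step, that a Pr\"ufer ring of dimension $\leqslant 1$ is normal, also comes for free from this equivalence, so the separate localization argument you sketch for it is unnecessary; your dimension count is the same as the paper's.)
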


\begin{proof}
We use the fact that a ring is normal coherent ring of Krull dimension $\leqslant 1$ if, and only if,
it is Pr\"ufer and of Krull dimension $\leqslant 1$  \cite{DLQS}.
We have shown that $\Tr{R}{f}$ is normal. Since $S = R[X]/\pair{f}$ is an integral extension of $R$
it is also of Krull dimension $\leqslant 1$ \cite{Ducos1} and so is its localization $\Tr{R}{f}$. Finally, $S$
is a finite free $R$-module, and so it is coherent if $R$ is coherent and so is its localization
$\Tr{R}{f}$.
\end{proof}

 This gives an alternative proof to the main result of \cite{Coq5}, that $\Tr{R}{f}$ is Pr\"ufer
when $R=k[X]$, in the case where $f$ is monic in $Y$. It is possible however to reduce the general case to this
case, by a change of variables.

\addcontentsline{toc}{section}{Références}

\tableofcontents

\end{document}